\theoremstyle{plain}
\newtheorem{lemma}{Lemma}[section]
\newtheorem{theorem}[lemma]{Theorem}
\newtheorem*{question}{Question}
\newtheorem*{theorem-nonum}{Theorem}
\theoremstyle{remark}
\theoremstyle{definition}
\newtheorem*{convention}{Convention}
\newcommand{\p}{\mathbb{P}}
\newcommand{\C}{\mathbb{C}}
\newcommand{\R}{\mathbb{R}}
\title{On the number of perfect triangles with a fixed angle}
\author[M. Makhul]{Mehdi Makhul}
\address[Mehdi Makhul]{Johann Radon Institute for Computational and Applied Mathematics (RICAM), Austrian Academy of Sciences, Linz}
\email{mmakhul@risc.jku.at}
\subjclass[2010]{52C10, 14G05} 
\keywords{Perfect triangle, Rational distance set}
\begin{document}

\begin{abstract}
Richard Guy asked the following question: can we find a triangle with rational sides, medians and area? Such a triangle is called a \emph{perfect triangle} and no example has been found to date.  It is widely believed that such a triangle does not exist. Here we use the setup of Solymosi and de Zeeuw about rational distance sets contained in an algebraic curve, to show that for any angle $0<\theta < \pi$, the number of perfect triangles with an angle $\theta$ is finite. A \emph{rational median set} $S$ is a set of points in the plane such that for every three non collinear points $p_1,p_2,p_3$ in $S$ all medians of the triangle with vertices at $p_i$'s have rational length. The second result of this paper is that no irreducible algebraic curve defined over $\R$ contains an infinite rational median set. 
\end{abstract}
\maketitle 

\section{introduction}

A median of a triangle is a line segment joining a vertex to the midpoint of the opposite side. Finding a triangle with rational sides, medians and area was asked as an open problem by Richard Guy in \cite[D$21$]{Guy2004}. Such a triangle is called a  \emph{perfect triangle}. Various research has been done towards this question, but to date the problem remains unsolved. If we do not require the area to be rational, there are infinitely many solutions. Euler gave a parametrization of such 'rational triangles', in which all three medians were rational, see \cite{Buchholz2002}, however there are examples of triangles with three integer sides and three integer medians that are not given by the Euler parametrization. Buchholz in \cite{Buchholz2002} showed that every rational triangle with rational medians corresponds to a point on a one parameter elliptic curve. In the same vein Buchholz and Rathbun \cite{Buchholz1997} have shown the existence of infinitely many \emph{Heron triangles} with two rational medians, where a Heron triangle is a triangle that has side lengths and area that are all rationals.

A related, but slightly different problem is the Erd\"{o}s-Ulam problem. We say that a subset $S \subset \R^2$ is a \emph{rational distance set} if the distance between any two points in $S$ is a rational number.

In $1945$ Ulam posed the following question, based on a result of Anning-Erd\"{o}s \cite{Anning1945}. See \cite[Problem D$20$]{Guy2004}.

\begin{question}[Erd\"{o}s-Ulam]
Is there a rational distance set~$S$ in the plane~$\R^2$ that is dense for the Euclidean topology?  
\end{question}
Solymosi and de Zeeuw \cite{Solymosi2010} used Faltings' Theorem to show that a rational distance set contained in a real algebraic curve contains finitely many points, unless the curve has a component which is either a line or a circle. Furthermore, if a line (resp. circle) contains infinitely many points of a rational distance set, then it contains all but at most $4$ (resp. $3$) points of the set. 

Although this problem is still open, there are several conditional proofs that show that the answer to the Erd\"{o}s-Ulam question is no. Shaffaf \cite{Shaffaf2018} and Tao~\cite{Tao2014} independently used the weak Lang conjecture to give a negative answer to this question. Pasten \cite{Pasten2017} also proved that the $abc$ conjecture implies a negative solution to the Erd\"{o}s-Ulam problem. 

In the same circle of ideas, the weak Lang conjecture was used \cite{Makhul2012} to show that if $S$ is a rational distance set of $\R^2$ which intersects any line in only finitely many points, then there is a uniform bound on the cardinality of the intersection of $S$ with any line. 
Recently, Ascher, Braune and Turchet \cite{Ascher2019} considered rational distance sets~$S \subset \R^2$ such that no line contains all but at most four points of $S$, and no circle contains all but at most three points of $S$. They showed by assuming the weak Lang conjecture that there exists a uniform bound on the cardinality of such sets $S$.

Along the same lines, a \emph{rational median set} $S$ is a set of non-collinear points in $\R^2$ such that for every three non-collinear points $p_1, p_2$ and $p_3$ in $S$ all medians of the triangle with vertices at $p_i$'s have rational length. In a similar spirit to the Erd\"{o}s-Ulam question one might expect that if $S$ is a rational median set in the real plane $\R^2$, then $S$ must be very restricted, even a finite set.

Following the setup of Solymosi and de Zeeuw \cite{Solymosi2010}, in this paper we consider two problems. First, fix an angle $\theta$ and consider the number of perfect triangles such that one of their angles is $\theta$. The following is the first result.

\begin{theorem}\label{thm:perfect-triangle}
	Given $0<\theta < \pi$, up to similarity, there are finitely many perfect triangles with an angle $\theta$.
\end{theorem}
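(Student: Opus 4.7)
The plan is to reformulate the problem, in the spirit of the Solymosi-de Zeeuw setup, as the counting of rational points on an auxiliary algebraic curve, and then to apply Faltings' theorem after a genus estimate. First I note that every angle of a perfect triangle has rational sine and cosine: placing the chosen vertex at the origin with an adjacent vertex on the positive $x$-axis, the rationality of the three sides forces the third vertex $(x,y)$ to have $x \in \mathbb{Q}$, and the rationality of the area forces $y \in \mathbb{Q}$, so $\cos\theta$ and $\sin\theta$ are rational ratios of rational coordinates. Hence if $\cos\theta$ or $\sin\theta$ is irrational, no perfect triangle has angle $\theta$ at all and the claim is trivial; I assume henceforth that $\alpha := \cos\theta$ and $\beta := \sin\theta$ both lie in $\mathbb{Q}$. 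By symmetry among the three angles it is enough to bound similarity classes of perfect triangles whose angle at a designated vertex $A$ equals $\theta$, and I normalize $A = (0,0),\ B = (1,0),\ C = r(\alpha,\beta)$ with $r = |AC| > 0$, so that a single parameter $r$ records the similarity class.

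Under this normalization, a perfect triangle corresponds exactly to an $r \in \mathbb{Q}$ for which the four quadratics
\[
f(r),\ \ f(-r),\ \ f(r/2),\ \ f(2r), \qquad f(s) := s^2 - 2\alpha s + 1,
\]
are all rational squares: the first is $|BC|^2$, and the other three encode the rationality of the three medians (after the usual factors of two coming from midpoints). In distance-set language each of the four conditions says that $C$ lies at rational distance from one of the rational anchors $(1,0),\ (-1,0),\ (2,0),\ (1/2,0)$ on the $x$-axis, so that $\{A\} \cup \{(1,0),(-1,0),(2,0),(1/2,0)\} \cup \{C\}$ is a rational distance set on the union of two rational lines through the origin. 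These four rational-square conditions cut out an algebraic curve $V \subset \mathbb{A}^5_{\mathbb{Q}}$, and the projection $V \to \mathbb{P}^1_r$ exhibits $V$ as a $(\mathbb{Z}/2)^4$-Galois cover branched over the eight complex roots of the four quadratics.

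A Riemann-Hurwitz computation gives $g(V) = 17$ whenever those eight branch points are pairwise distinct, which happens for every $\alpha \ne 0$. The single degenerate case is $\theta = \pi/2$, where $f(r) = f(-r)$ and $V$ splits into two $\mathbb{Q}$-rational components; each remains a $(\mathbb{Z}/2)^3$-cover over six distinct branch points and still has genus $5$. In every case each geometric component of $V$ has genus at least two, so Faltings' theorem makes $V(\mathbb{Q})$ finite and therefore produces only finitely many admissible $r$. Running the same argument with the angle $\theta$ placed at the vertices $B$ or $C$ -- the latter forcing $C$ onto a rational circle rather than a rational line but yielding a completely parallel auxiliary curve of high genus -- concludes the proof.

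The main technical hurdle is the genus bookkeeping: one must verify that no further degenerations of the eight-point branch divisor occur beyond the single case $\theta = \pi/2$ noted above, which reduces to a short case analysis in $\alpha$. Modulo that, the argument is a direct combination of the Solymosi-de Zeeuw rational-distance-set setup with Riemann-Hurwitz and Faltings.
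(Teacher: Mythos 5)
Your argument is correct and reaches the paper's conclusion by a genuinely different, and in several respects cleaner, construction. The paper also begins from the rationality of $\cos\theta$ and $\sin\theta$, but it normalizes the side \emph{opposite} $\theta$ to have length $1$, so the two adjacent sides $(a,b)$ range over the conic $X_0\colon a^2+b^2-2ab\cos\theta=1$; the median conditions are then imposed as double covers $X_1,X_2,X_3\to X_0$, and the genus of the fibre product $X_{12}$ is pinned down to $5$ via separate irreducibility arguments for $X_1$, $X_2$, $X_{12}$ and an appeal to a lemma of Heged\"us--Li--Schicho--Schr\"ocker on curves in products of genus-$\le 1$ curves; the case $\theta=\pi/2$ is handled by an entirely separate explicit hyperelliptic curve $z^2=(4x^2+1)(x^2+1)(x^2+4)$ of genus $2$. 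Your normalization (one adjacent side equal to $1$, the other equal to $r$) collapses the base to $\mathbb{P}^1_r$, turns all four conditions into ``a quadratic in $r$ is a rational square,'' and makes the genus an elementary Riemann--Hurwitz count; I checked your identification of the squared median lengths with $f(-r)/4$, $f(r/2)$ and $f(2r)/4$, your branch-point analysis (the eight roots are $e^{\pm i\theta}$, $-e^{\pm i\theta}$, $2e^{\pm i\theta}$, $\tfrac12 e^{\pm i\theta}$, and the only possible collision is between the first two pairs, exactly when $\theta=\pi/2$), and the values $g=17$ and $g=5$ per component in the degenerate case, and all are right. Two points to tighten: (i) the computation $g(V)=17$ presupposes that $V$ is geometrically irreducible, i.e.\ that the cover really has degree $16$; you should record the one-line reason, namely that any nonempty product of the four quadratics is squarefree of positive degree (the eight roots being distinct), hence not a square in $\C(r)$, so the four classes are independent in $\C(r)^*/(\C(r)^*)^2$. (ii) The closing sentence about re-running the argument with the angle placed at $B$ or $C$ is unnecessary and slightly confused: any perfect triangle with an angle $\theta$ can be carried by a similarity and a relabelling of vertices into your normalized position with the $\theta$-vertex at the origin, so the single one-parameter family in $r$ already exhausts all similarity classes.
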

Our second result asserts that if $S$ is a rational median set, then every real algebraic curve intersects $S$ in finitely many points.
\begin{theorem} \label{thm:intersection-curve}
	Every rational median set in the plane $\R^2$ has finitely many points in common with an irreducible real algebraic curve.
\end{theorem}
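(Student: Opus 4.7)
The plan is to port the Solymosi--de Zeeuw method \cite{Solymosi2010} to the rational median setting, using midpoints of pairs from $S$ as ``base points'' in place of points of $S$ itself. I first record an algebraic prerequisite: from the classical median--side identity $4m_i^2 = 2a_j^2 + 2a_k^2 - a_i^2$, inversion expresses each $a_i^2$ as a $\Q$-linear combination of the $m_\bullet^2$, so rationality of all three medians of a triangle in $S$ forces all three squared sides to be rational. Since $S$ is non-collinear, any two of its points sit in some non-degenerate triangle with a third point of $S$, and hence every pairwise squared distance in $S$ is rational. After translating one point of $S$ to the origin and rotating a second onto the $x$-axis, the coordinates of $S$ are confined to a number field $K$ (contained in the compositum of two quadratic extensions of $\Q$), so Faltings' theorem will be available over $K$.

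Assume for contradiction that $S\cap C$ is infinite, and first suppose $C$ is neither a line nor a circle. I pick four distinct points $p_1,\dots,p_4\in S$ and set $M_1 = (p_1+p_2)/2$, $M_2 = (p_3+p_4)/2$. For every $p\in S\cap C$ outside the finitely many points of $C$ lying on $\overline{p_1p_2}$ or $\overline{p_3p_4}$, the triangles $pp_1p_2$ and $pp_3p_4$ are non-degenerate, and the medians from $p$ in these triangles give $|p-M_1|, |p-M_2|\in\Q$. Forming the fiber product $\tilde C$ of the two double covers $u^2 = |p-M_1|^2$ and $v^2 = |p-M_2|^2$ over $C$, each such $p$ lifts to a $K$-rational point of $\tilde C$. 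A Riemann--Hurwitz computation in the style of \cite{Solymosi2010}, using that $C$ is neither a line nor a circle, shows $g(\tilde C)\ge 2$; Faltings' theorem over $K$ then forces $\tilde C(K)$, and hence $S\cap C$, to be finite.

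The genuine novelty lies in the two boundary cases, where the rational-distance theorem of Solymosi--de Zeeuw gives no information. For $C$ a circle I take $p_1,\dots,p_4 \in S\cap C$ and use the same two midpoints: pulling each double cover back to the rational parametrisation of $C$ gives $v_k^2 = (1+t^2)Q_k(t)$ with $Q_k$ a quadratic whose roots are distinct from $\pm i$, so the two covers share the two ramification points $t = \pm i$ and each has two further unshared ones, and Riemann--Hurwitz yields $g(\tilde C) = 3$. Faltings closes this case. For $C = L$ a line I use the non-collinearity of $S$ to pick $q \in S\setminus L$ and three distinct points $r_1, r_2, r_3 \in S \cap L$; the midpoints $N_i = (r_i+q)/2$ lie off $L$, and every $p \in S \cap L \setminus \{r_1, r_2, r_3\}$ is at rational distance from each $N_i$ as the median from $p$ in the non-degenerate triangle $p r_i q$. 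The triple fiber product of the three conics $u_i^2 = |p - N_i|^2$ over $L$ has genus $5$ by Riemann--Hurwitz, and Faltings again closes the argument.

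The main technical obstacle I anticipate is verifying the genus bounds in the boundary cases, i.e. ensuring the chosen midpoints are in sufficiently general position with respect to $C$ that no unintended coincidences among ramification points of the various double covers occur and the fiber-product cover is irreducible. Both requirements should be arrangeable by a preliminary generic choice of the auxiliary points in the infinite set $S$, but spelling out exactly which degeneracies to avoid, and showing that an avoiding arrangement exists, is the bookkeeping that this plan defers.
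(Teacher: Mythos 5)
Your proposal is correct and follows essentially the same route as the paper's proof: normalize coordinates into $\mathbb{Q}(\sqrt{k})$ using the median data, build degree-two covers of $C$ branched over the isotropic lines through points determined by pairs of elements of $S$, bound the genus of the resulting fiber product by Riemann--Hurwitz and Bezout, and finish with Faltings over the fixed number field. The remaining differences are organizational rather than substantive: the paper uses a single base pair $p_1,p_2$ and all three medians of the triangle $pp_1p_2$ (so its second branch locus sits over the reflected point $2p_1-p_2$ instead of a second midpoint), it absorbs circles into the general degree $d\ge 2$ case because those two base points already yield distinct affine branch points, and in the line case it multiplies the three quadratics into a single genus-$2$ hyperelliptic curve instead of your genus-$5$ triple fiber product, while the genericity and irreducibility bookkeeping you defer is exactly what the paper discharges via its coordinate lemma and the fiber-product lemma of Heged\"us--Li--Schicho--Schr\"ocker.
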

\section{Preliminaries on genera of curves}

Given an affine algebraic curve in $\R^2$, defined by a polynomial~$f\in \mathbb{K}[x,y]$, ($\mathbb{K}$ is a subfield of $\R$) one can consider its projective closure, which is a projective algebraic curve, by taking the zero set of the homogenisation of $f$. This curve in $\p^2_{\R}$ then extends to $\p^2_{\C}$, by taking the complex zero set of the homogenised polynomial. In particular, when we consider the genus of a curve, we are talking about complex projective algebraic curves. 

To a given irreducible projective curve $X$ over complex numbers $\C$ we associate two invariants. One is the \emph{geometric genus} $g(X)$, and the other one is the \emph{arithmetic genus} $p_a(X)$. For more details on these notions we refer the reader to~ \cite[\href{https://stacks.math.columbia.edu/tag/0BYE}{Tag 0BYE}]{StacksProjectAuthors2018}.

If $X$ is a smooth complex algebraic curve, then the \emph{geometric genus} of $X$ is 
\[
g(X)=\dim_{\mathbb{C}}H^0(X,\Omega_{X}),
\]
where $\Omega_{X}$ is the canonical bundle on $X$. Moreover, since a smooth complex projective curve is a compact Riemann surface, the geometric genus coincides with the topological genus of the surface.  For a singular curve~$X$ we define the geometric genus to be the geometric genus of a smooth curve birational to~$X$. 

For any complex algebraic curve $X$ the \emph{arithmetic genus} of $X$ is defined as
\[
p_a(X)=1-\dim_{\mathbb{C}} H^0(X,\mathcal{O}_X)+\dim_{\mathbb{C}} H^1(X,\mathcal{O}_X).
\]
It is known that if $X$ is a smooth curve, then $p_a(X)=g(X)$.The arithmetic genus of a curve contained in a smooth surface with the canonical divisor $\mathcal{K}$ is given by (see \cite[Proposition 1.5, page 361]{Hartshorne1977} when $X$ is smooth)  
\begin{equation}\label{eq:arithmetic-genus}
p_a(X)=\frac{X\cdot(X+\mathcal{K})}{2}+1,
\end{equation}
where $\_ \cdot \_$ denotes the intersection product of the surface. For instance, if the surface is the projective plane $\p^2$ and $X$ is a planar curve of degree~$d$ we have~$X=dL$ and $\mathcal{K}=-3L$ where $L$ is the class of a line. Hence
\[
p_a(X)=\frac{dL\cdot(d-3)L}{2}+1= \frac{(d-1)(d-3)}{2}.
\]

Equation \eqref{eq:arithmetic-genus} enables us to compute the arithmetic genus for a reducible curve contained in a smooth surface with canonical divisor $\mathcal{K}$. In particular, if $X$ is a reducible curve with components $D_1, \dots, D_m$, substituting $X=D_1+\dots+D_m$ in Equation~\eqref{eq:arithmetic-genus}, we obtain 
\begin{equation}\label{eq:genus-reducible}
p_a(X)=\sum_{k=1}^{m}p_a(D_k)+ \sum_{i\not=j}D_i\cdot D_j- (m-1).
\end{equation}
\begin{convention}
In this paper, by \emph{genus} of a curve we will mean the geometric genus, unless
otherwise specified.
\end{convention}
The main ingredients in our proof are the following theorem of Faltings \cite{Faltings1984} and the Riemann–Hurwitz formula \cite[Theorem $5.9$]{Silverman1986}.

\begin{theorem}[Faltings]
	\label{thm:faltings}
	Let $K$ be a number field. If $X$ is an algebraic curve over~$K$ of genus $g\ge 2$, then the set $X(K)$ of $K$-rational points is finite.
\end{theorem}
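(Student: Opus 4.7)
Faltings' theorem is a landmark result whose proof is considerably deeper than any other argument in this paper; in practice one simply cites \cite{Faltings1984}. Nonetheless, if asked to outline a strategy, I would follow the Vojta--Bombieri approach via Diophantine approximation rather than Faltings' original route through the Shafarevich and Tate conjectures and semistable reduction of abelian varieties, since the former is more self-contained and the pieces fit together in a more transparent way.

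The plan is a proof by contradiction: assume $X(K)$ is infinite. After possibly replacing $K$ by a finite extension (which does not change the genus), fix a $K$-rational basepoint $P_0\in X(K)$ and embed $X$ into its Jacobian $J=\operatorname{Jac}(X)$ via the associated Abel--Jacobi map. By the Mordell--Weil theorem, $J(K)$ is a finitely generated abelian group, so $J(K)\otimes \R$ is a finite-dimensional Euclidean space when equipped with the N\'eron--Tate canonical height $\hat h$. A pigeonhole argument on directions in $J(K)\otimes \R$ then produces a pair of $K$-rational points $P,Q\in X(K)$ with large and comparable canonical heights whose images in $J(K)\otimes\R$ are nearly parallel.

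Next, on the surface $X\times X$ I would construct an auxiliary small section. For carefully chosen integers $d_1,d_2$ with $d_1/d_2\approx \hat h(Q)/\hat h(P)$, one uses Siegel's lemma together with Riemann--Roch on $X\times X$ to produce a nonzero global section $s$ of a line bundle roughly of the form $d_1\pi_1^*A + d_2\pi_2^*A$ (with $A$ an ample divisor on $X$) which vanishes to a prescribed index along the diagonal and has controlled Arakelov-theoretic height. Evaluating $s$ at $(P,Q)$ and applying the product formula then yields an upper bound on the index of $s$ at $(P,Q)$, while Dyson's lemma -- or its refined arithmetic avatar, the Product Theorem -- supplies a matching lower bound. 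Choosing $d_1,d_2$ and $\hat h(P)$ sufficiently large makes these two bounds inconsistent, contradicting the assumption that $X(K)$ is infinite.

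The main obstacle is unquestionably the arithmetic construction in the previous paragraph: producing the auxiliary section with sharp enough Arakelov height bounds and establishing a usable Product Theorem are the technical heart of Vojta's method and have no analogue in elementary Diophantine geometry. Everything else -- the Mordell--Weil pigeonhole, the embedding into the Jacobian, and the final numerical contradiction -- is comparatively routine once these tools are in hand.
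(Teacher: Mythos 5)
The paper does not prove this statement at all: it is quoted as a black box with a citation to Faltings' original work, whose route goes through the Shafarevich and Tate conjectures, semistable reduction, and finiteness of isogeny classes of abelian varieties. Your proposal instead sketches the Vojta--Bombieri proof by Diophantine approximation, and as an outline of that strategy it is essentially accurate: Mordell--Weil plus a pigeonhole in the height lattice to find nearly parallel points of large comparable height, an auxiliary small section on $X\times X$ via Siegel's lemma and Riemann--Roch, and a clash between two estimates on its index at $(P,Q)$. Two caveats. First, you have the roles of the two index estimates reversed: in Bombieri's argument it is the arithmetic input (the near-parallelism and large heights of $P,Q$, fed through the product formula) that forces the index at $(P,Q)$ to be \emph{large}, while Roth's lemma or the Product Theorem caps it from \emph{above}; as written your ``upper bound from the product formula, lower bound from Dyson'' would not produce a contradiction in the direction you need. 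Second, as you concede, the technical core --- the Arakelov-theoretic height bound on the auxiliary section and a usable Product Theorem --- is not supplied, so this is a roadmap rather than a proof; for the purposes of this paper the correct move is exactly what the author does, namely to cite the result.
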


\begin{theorem}[Riemann-Hurwitz]
	\label{thm:riemann-hurwitz}
	Let $\phi \colon X_1 \rightarrow X_2$ be a non-constant separable map of curves. Then
	\[
	2g_1-2 \ge (\deg \phi)(2g_2-2)+ \sum_{p\in X_1} (e_p-1),
	\]
	where $g_i$ is the genus of $X_i$ and $e_p$ is the ramification index of $\phi$ at $p$.
\end{theorem}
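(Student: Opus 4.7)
The plan is to reduce the statement to a divisor-theoretic identity on the source curve and then extract local information from the pullback of differentials. Since the excerpt defines the geometric genus of a smooth projective complex curve as $\dim_{\C} H^0(X,\Omega_X)$ and the canonical bundle $\Omega_X$ has degree $2g(X)-2$, establishing the inequality amounts to comparing $\deg \Omega_{X_1}$ with $(\deg \phi)\cdot \deg \Omega_{X_2}$ and bounding their difference from below by $\sum (e_p-1)$.

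First, I would replace $X_1, X_2$ by their smooth projective models (which changes neither side of the inequality, since genus and ramification behave as expected under normalization) and examine the natural pullback map $\phi^{\ast}\Omega_{X_2} \to \Omega_{X_1}$ of invertible sheaves. Separability of $\phi$ guarantees that this map is nonzero, hence injective, because both sheaves are locally free of rank one on a curve. Its cokernel is a torsion sheaf supported on the finitely many points where the map fails to be an isomorphism, so
\[
\Omega_{X_1} \;\cong\; \phi^{\ast}\Omega_{X_2} \otimes \mathcal{O}_{X_1}(R)
\]
for a unique effective divisor $R=\sum_{p\in X_1} d_p\cdot p$, the \emph{ramification divisor}.

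Second, I would compute $d_p$ locally. Pick uniformizers $t$ at $p$ and $s$ at $\phi(p)$; by definition of the ramification index we have $\phi^{\ast}s = u\cdot t^{e_p}$ for some local unit $u$. Differentiating yields
\[
\phi^{\ast}(ds) \;=\; \left( e_p\, u \;+\; t\,\tfrac{du}{dt} \right) t^{\,e_p-1}\, dt,
\]
whence $d_p \ge e_p-1$, with equality whenever $e_p$ is coprime to the residue characteristic (in particular, always in characteristic zero). The possibility of wild ramification, where $d_p$ strictly exceeds $e_p-1$, is precisely what forces an inequality rather than an equality in the Hurwitz formula.

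Third, I would take degrees. Multiplicativity of degree under pullback gives $\deg \phi^{\ast}\Omega_{X_2} = (\deg \phi)(2g_2-2)$, while $\deg R = \sum_p d_p \ge \sum_p (e_p-1)$ by the local computation. Combining with $\deg\Omega_{X_1}=2g_1-2$ yields the stated inequality. The genuine obstacle in a self-contained write-up is not this local ramification analysis but the degree formula $\deg\Omega_X = 2g-2$ itself, which needs either Serre duality, a residue theorem on the curve, or reduction to $\p^1$ where $\Omega_{\p^1}\cong\mathcal{O}(-2)$ is computed by hand; for the purposes of this paper one may simply cite this degree formula from a standard reference on compact Riemann surfaces.
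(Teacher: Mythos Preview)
The paper does not give its own proof of this theorem: it is quoted as background and attributed to \cite[Theorem~5.9]{Silverman1986}, so there is nothing in the paper to compare your argument against. Your sketch is the standard proof (essentially the one in Silverman or Hartshorne): identify the ramification divisor via the inclusion $\phi^{\ast}\Omega_{X_2}\hookrightarrow\Omega_{X_1}$, compute its local multiplicities using uniformizers, and take degrees. The local computation and the handling of wild ramification are correct, and your remark that the only nontrivial external input is $\deg\Omega_X=2g-2$ is fair. For the purposes of this paper, citing Riemann--Hurwitz as a black box is appropriate, and your write-up would be out of place here; but as a standalone proof of the inequality it is fine.
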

We will make use of the following result from \cite{Hegedues2015}.

\begin{lemma}\label{lm:josef-zija}
Let $Y_1$ and $Y_2$ be two smooth curves of genus at most $1$. Let $Y\subset Y_1 \times Y_2$ be an irreducible curve such that the two projections restricted to $Y$ are either birational or $2:1$ maps to $Y_1$ resp $Y_2$. Then
\begin{itemize}
		\item If $Y_1$ and $Y_2$ are rational curves, then $Y$ is a curve in $\p^1 \times \p^1$ of bi-degree $2$, which has arithmetic genus $1$. The geometric genus is $1$ in the nonsingular case and $0$ if $Y$ has a double point.
		\item If $Y_1$ is elliptic and $Y_2$ is rational, then the arithmetic genus of $Y$ is $3$.
		\item If $Y_1$ and $Y_2$ are both elliptic, then the genus of $Y$ is $5$.
	\end{itemize}
\end{lemma}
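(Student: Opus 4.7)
The plan is to view $Y$ as an effective divisor on the smooth surface $S := Y_1 \times Y_2$ and to apply the adjunction formula \eqref{eq:arithmetic-genus} to compute its arithmetic genus. Let $A_1 \subset S$ denote the class of a fibre $\{y_1\}\times Y_2$ of the first projection and $A_2$ that of $Y_1\times\{y_2\}$, so that $A_1 \cdot A_2 = 1$ and $A_1^2 = A_2^2 = 0$. By the canonical-class formula for a product, $K_S \equiv (2g_1 - 2)A_1 + (2g_2 - 2)A_2$. The hypothesis that $p_i|_Y \colon Y \to Y_i$ has degree $d_i \in \{1,2\}$ translates into $Y \cdot A_i = d_i$, since the intersection with a generic fibre computes the degree of the covering.

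When the N\'eron--Severi group of $S$ has rank $2$---which is automatic whenever one of $Y_1, Y_2$ is rational, and which holds in the elliptic/elliptic case when $Y_1, Y_2$ are non-isogenous---these two intersection numbers pin down the class of $Y$. In the situation that both projections are $2\!:\!1$, this forces $Y \equiv 2A_1 + 2A_2$ and $Y^2 = 8$. Substitution into $p_a(Y) = 1 + \tfrac{1}{2}\, Y\cdot(Y + K_S)$ then yields $p_a(Y) = 1$ for the rational/rational case (with $K_S = -2A_1 - 2A_2$ and $Y\cdot K_S = -8$), $p_a(Y) = 3$ for the elliptic/rational case (with $K_S = -2A_2$ and $Y\cdot K_S = -4$), and $p_a(Y) = 5$ for the elliptic/elliptic case (with $K_S = 0$). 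The geometric genus coincides with the arithmetic one when $Y$ is smooth; in the first case, a single node drops it by the $\delta$-invariant $1$, producing an irreducible curve with normalisation $\p^1$ and hence $g = 0$.

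The delicate step is the elliptic/elliptic case when $Y_1$ and $Y_2$ are isogenous: then $\mathrm{NS}(S)$ has rank greater than $2$ since $\mathrm{Hom}(Y_1, Y_2) \neq 0$, and the numerical class of $Y$ is not determined by the two bidegree conditions alone. One can still write $Y \equiv 2A_1 + 2A_2 + E$ with $E \cdot A_1 = E \cdot A_2 = 0$, and the Hodge index theorem, applied to $E$ against the ample class $A_1 + A_2$ (ample since $(A_1+A_2)^2 = 2 > 0$ and $(A_1+A_2)\cdot C \geq 1$ for every irreducible curve $C$), yields $E^2 \le 0$, so $Y^2 \le 8$ and $p_a(Y) \le 5$. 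To upgrade this to equality one must argue $E \equiv 0$, ruling out that $Y$ factors through a nontrivial isogeny between $Y_1$ and $Y_2$; this is the genuinely delicate arithmetic-geometric point, and is where I expect the reasoning of \cite{Hegedues2015} to concentrate its work.
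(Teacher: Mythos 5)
First, a point of comparison that matters here: the paper does not prove this lemma at all --- it imports it verbatim from \cite{Hegedues2015} --- so there is no in-paper argument to measure yours against, and your attempt has to stand on its own. Judged that way, the adjunction computation is the natural route and is carried out correctly: the identities $Y\cdot A_i=d_i$, the formula $K_S\equiv(2g_1-2)A_1+(2g_2-2)A_2$, and substitution into \eqref{eq:arithmetic-genus} do give $p_a(Y)=1,3,5$ once one knows $Y\equiv 2A_1+2A_2$, and that class is forced whenever $\mathrm{NS}(Y_1\times Y_2)$ has rank $2$, which settles the first two bullets (you only treat the case where both projections are $2:1$, but the stated conclusions only make sense there anyway, and the nodal remark via the $\delta$-invariant is fine).

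The gap you flag in the isogenous elliptic/elliptic case is genuine, and in fact it cannot be closed: the lemma as literally stated is false there. Take an elliptic curve $E'$ with two distinct nonzero $2$-torsion points $P,Q$, set $Y_1=E'/\langle P\rangle$ and $Y_2=E'/\langle Q\rangle$, and let $Y$ be the image of $E'\to Y_1\times Y_2$ under the pair of $2$-isogenies. Since $\langle P\rangle\cap\langle Q\rangle=\{0\}$ this map is an embedding, so $Y$ is a smooth irreducible elliptic curve whose two projections are $2:1$, yet $g(Y)=p_a(Y)=1$; in your notation $E^2=-8$, so your inequality $p_a(Y)\le 5$ is sharp in the wrong direction and no argument can upgrade it to equality from the stated hypotheses alone. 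The source \cite{Hegedues2015} in effect assumes the class of $Y$ is $2A_1+2A_2$ (equivalently, that the two double covers do not share branch data), and in the application the author supplies exactly this missing input by hand: $X_{12}$ is the fibre product of two double covers of a rational curve $X_0$ with disjoint branch loci, so Theorem~\ref{thm:riemann-hurwitz} applied to the degree-$2$ map $X_{12}\to X_1$, branched over the $8$ preimages in $X_1$ of the $4$ branch points of $X_2\to X_0$, gives $2g(X_{12})-2\ge 2(2\cdot 1-2)+8$, i.e.\ $g(X_{12})\ge 5$ directly. If you want a self-contained and correct proof of the third bullet, that branch-locus (or fibre-product) hypothesis must be added to the statement; with it, your Hodge-index bound combines with the Riemann--Hurwitz lower bound to give equality.
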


\section{Proof of Theorem \ref{thm:perfect-triangle} }

The proof of this theorem consists of two parts. First we assume $\theta = \frac{\pi}{2}$, since in this case the proof is different to other angles.

\textbf{Case $1$ if $\theta=\frac{\pi}{2}$:}
Suppose that $\Delta$ is a right triangle with rational sides and medians.  We show that there are finitely many such triangles. Without loss of generality we may assume the hypotenuse of $\Delta$ has length $b$ and the sides adjacent to the right angle have length $a$ and $1$. Let~$m_1, m_2$, and~$m_3$ denote the median lengths of $\Delta$; see Figure~\ref{fig:right-angle}. By the formulas expressing medians in terms of edges,
we have
\[
4m_1^2=2a^2+2b^2-1,\quad
4m_2^2=2a^2+2-b^2,\quad
4m_3^2=2b^2+2-a^2,
\]
where, $m_1, m_2$ and $m_3$ are rational numbers. On the other hand, by the Pythagorean Theorem we have $b^2=a^2+1$, and if we substitute this into the above formula we obtain
\[
4m_1^2=4a^2+1, \quad
4m_2^2=a^2+1,\quad
4m_3^2=a^2+4.
\] 
Define the curve $C$ in the $xz$-plane:
\[
C\colon z^2=(4x^2+1)(x^2+1)(x^2+4).
\]
If $a$ and $b$ are the length of a right angle triangle with all medians $m_1, m_2$ and $m_3$ having rational length, then we obtain a rational point $(a,z)=(a,2^3m_1m_2m_3)$ on $C$. On the other hand the roots of the right hand side are distinct, thus $C$ is a hyperelliptic curve of degree $6$ in the~$xz$-plane~\cite[Section 6.5]{Shafarevich2013}. Thus $C$ has genus two and by Faltings' Theorem it has finitely many rational points. This completes the proof when $\theta=\frac{\pi}{2}$.

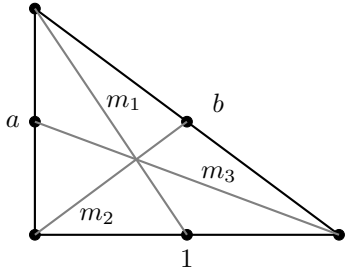
\begin{figure}
	\begin{tikzpicture}
	\filldraw [black] (0,0) node[anchor=west]{} circle (2pt);
	\filldraw [black] (4,0) node[anchor=west]{} circle (2pt);
	\filldraw [black] (0,3) node[anchor=west]{} circle (2pt);
	\filldraw [black] (2,0) node[anchor=west]{} circle (2pt);
	\filldraw [black] (0,1.5) node[anchor=west]{} circle (2pt);
	\filldraw [black] (2,1.5) node[anchor=west]{} circle (2pt);
	\draw[black, thick] (0,3) -- (4,0);
	\draw[black, thick] (0,0) -- (4,0);
	\draw[black, thick] (0,0) -- (0,3);
	\node[circle, anchor=north] (n1) at (2,0) {$1$};
	\node[circle, anchor=east] (n1) at (0,1.5) {$a$};
	\node[circle, anchor=west] (n1) at (2.1,1.75) {$b$};
	\node[circle, anchor=west] (n1) at (0.75,1.75) {$m_1$};
	\node[circle, anchor=west] (n1) at (2,0.8) {$m_3$};
	\node[circle, anchor=west] (n1) at (0.4,0.25) {$m_2$};
	\draw[gray, thick] (0,0) -- (2,1.5);
	\draw[gray, thick] (4,0) -- (0,1.5);
	\draw[gray, thick] (0,3) -- (2,0);
	\end{tikzpicture}
	\caption{A right angle with side lengths $1, a$, and $b$, and median lengths $m_1, m_2$, and $m_3$.}\label{fig:right-angle}
\end{figure}

\textbf{Case $2$ if $\theta\not=\frac{\pi}{2}$:}
Fix an angle $\theta\not=\frac{\pi}{2}$. Let $a, b$ denote the side lengths of a perfect triangle~$\Delta$ such that the angle between these two sides is $\theta$. Without loss of generality we may assume the side opposite $\theta$ has length $1$. Let $\lambda=\cos(\theta)$ and $\alpha=\sin(\theta)$. By the law of cosines we have 
\[
1=a^2+b^2-2\lambda ab.
\]
The rationality of the area of $\Delta$ and law of cosines guarantee that both $\lambda$ and $\alpha$ are rational numbers.
Let $X_0$ be the ellipse defined by\footnote{In general the conic $ax^2+bxy+cy^2+dx+ey+f=0$ is an ellipse if $b^2-4ac <0$. In our situation $b=2\lambda=2\cos(\theta)$, where $0<\theta<\pi$ and $\theta \not= \frac{\pi}{2}$ and $d=e=0$.}
\[
G(x,y)=1-x^2-y^2+2\lambda xy.
\]

Since $\Delta$ is a perfect triangle, all its medians $m_1, m_2, m_3$ and its area $s$ are rational. We have,
\begin{equation*}\label{eq:median+area}
\begin{aligned}
4m_1^2&=2a^2+2b^2-1,\\
4m_2^2&=2a^2+2-b^2,\\
4m_3^2&=2b^2+2-a^2,\\
s&=\frac{ab \alpha}{2}.
\end{aligned}
\end{equation*}
On the other hand $G(a,b)=0$, so for every perfect triangle as above we obtain a rational point $(a,b,m_1,m_2,m_3,s)$ on the curve $X_{\alpha}$ in $\R^6$, given by
\begin{equation*}\label{eq:perfect-curve}
\begin{aligned}
G(x,y)&=0,\\
4t_1^2-2x^2-2y^2+1&=0,\\
4t_2^2-2x^2-2+y^2&=0,\\
4t_3^2-2y^2-2+x^2&=0,\\
w-\frac{xy \alpha}{2}&=0.
\end{aligned}
\end{equation*}
We shall show that the genus of $X_{\alpha}$ is strictly bigger than $1$. To do that consider the curves 
\begin{equation*}\label{eq:space-curve}
\begin{aligned}
X_1&=\left\{(x,y,t_1): G(x,y)=0,\, 4t_1^2-2x^2-2y^2+1=0 \right\},\\
X_2&=\left\{(x,y,t_2): G(x,y)=0,\, 4t_2^2-2x^2-2+y^2=0 \right\},\\
X_3&=\left\{(x,y,t_3): G(x,y)=0,\, 4t_3^2-2y^2-2+x^2=0 \right\}.\\
\end{aligned}
\end{equation*}
Define the curve $X_{12}$ in $\R^4$, by
\begin{equation*} \label{eq:space-curve2}
\begin{aligned}
G(x,y)&=0,\\
4t_1^2-2x^2-2y^2+1&=0,\\
4t_2^2-2x^2-2+y^2&=0.
\end{aligned}
\end{equation*}
Similarly, we may define $X_{13}$ and $X_{23}$. By considering the Jacobian matrix of $X_1$ we can see $X_1$ is a smooth curve (even in the projective space $\p^3$), hence the geometric genus of $X_1$ is equal to the arithmetic genus of $X_1$. Now we show that the geometric genus of $X_1$ is $1$.

Consider the projection map~$\pi_1 \colon X_1 \rightarrow X_0$ given by
\[(x,y,t_1) \mapsto (x,y).\]
The preimage of each point $(x,y) \in X_0$ contains two points $(x,y, \pm t_1)$ in $X_1$ except when $t_1=0$. Hence $\pi_1$ is a map of degree $2$. By applying the Riemann-Hurwitz formula we can bound the genus of $X_1$ from below. In particular
\[
2g(X_1)-2 \ge \deg(\pi_1)(2g(X_0)-2)+ \sum_{p\in X_1}(e_p-1).
\] 
Since the genus of $X_0$ is zero (it is a conic), we have
\[
g(X_1) \ge -1+\frac{1}{2}\sum_{p\in X_1}(e_p-1).
\]
So to get $g(X_1) \ge 1$, we need to show that the projection $\pi_1$ has at least three ramification points. The potential ramification points correspond to the preimages of the intersection of $X_0=V(G(x,y))$ with the conic $2x^2+2y^2-1=0$, where by Bezout's Theorem there are $4$ such points, counting with multiplicities. 
\[
2x^2+2y^2-1=0,\quad x^2+y^2-1-2\lambda xy=0.
\]
By computing the discriminant we can see that this circle and ellipse intersect at $4$ distinct points. Therefore, we get $4$ ramification points. Thus Riemann-Hurwitz implies that the genus of~$X_1$ is at least~$1$. On the other hand, $X_1$ is a smooth space curve of degree $4$, so its genus is at most $1$. Hence $g(X_1) = 1$.  

\textbf{Claim: $X_1$ is irreducible}

The proof is by contradiction. Suppose that $X_1$ is a reducible curve, and $D_1,\dots, D_m$ are its irreducible components, then by the Equation \eqref{eq:genus-reducible} we know that the arithmetic genus~$p_a(X_1)$ is
\[
p_a(X_1)=\sum_{k=1}^{m}p_a(D_k)+ \sum_{i\not=j}D_i\cdot D_j- (m-1),
\]
where $D_i \cdot D_j$ is the intersection of the components $D_i$ and $D_j$. On the other hand, we have seen that $X_1$ is smooth, hence its geometric genus is equal to the arithmetic genus. Moreover, its irreducible components do not intersect. Hence~$p_a(X_1)=g(X_1)=1$, which implies that the number of irreducible components of $X_1$ is at most two. However, the degree of $X_1$ is $4$, thus if $X_1$ is reducible then it must be the union of an elliptic curve $E$ and a line $l$ that does not intersect $E$. Therefore, 
\[
p_a(X_1)=p_a(E)+p_a(l)-1,
\]
and this is a contradiction. Hence, $X_1$ is irreducible. A similar argument implies that $X_2$ is also irreducible.

\textbf{Claim: $X_{12}$ is an irreducible curve}

Consider two $2:1$ projection maps $\pi_1 \colon X_1 \rightarrow X_0$ and $\pi_2 \colon X_2 \rightarrow X_0$ defined by $\pi_1((x,y,t_1))=(x,y)$ and $\pi_2((x,y,t_2))=(x,y)$ respectively. The curve $X_{12}$ is also given as follows,
\[
X_{12}\colon=\left\{(p_1,p_2) \in X_1 \times X_2: \pi_1(p_1)=\pi_2(p_2) \right\}.
\]
$X_{12}$ is the fiber product of $X_1$ and $X_2$. By \cite[Theorem $3.3$ page $86$]{Hartshorne1977} since $X_1$ and $X_2$ are irreducible, $X_{12}$ is irreducible, unless the two projection maps $\pi_1$ and $\pi_2$ have some branching points in common. The branching points of $\pi_1$ are in the form $(x_i,y_i)$, where $x_i$ and $y_i$ satisfy
\begin{equation*}
\begin{aligned}
x_i^2&=\frac{\lambda+ \sqrt{\lambda^2-1}}{4\lambda}, \quad & y_i^2&=\frac{1}{4\lambda(\lambda+\sqrt{\lambda^2-1})},\, \text{for}\, i=1, 2,\\
x_i^2&=\frac{\lambda-\sqrt{\lambda^2-1}}{4\lambda}, \quad& y_i^2&=\frac{1}{4\lambda(\lambda-\sqrt{\lambda^2-1})},\, \text{for} \, i=3, 4.
\end{aligned}
\end{equation*}
None of them are a branching point of $\pi_{2}$. In particular, this implies that $X_{12}$ is a smooth curve\footnote{https://math.stackexchange.com/questions/1479139/fiber-products-of-curves}. 
Now by applying Lemma~\ref{lm:josef-zija} (see~\cite[Lemma $3$]{Hegedues2015}) $X_{12}$ is an irreducible curve with genus~$5$, unless the two projection maps~$\phi_1$ and $\phi_2$ from $X_{12}$ to~$X_1$ and $X_2$ respectively, defined by: 
\[
\phi_1(x,y,t_1,t_2)=(x,y,t_1) \quad \text{and} \quad \phi_2(x,y,,t_1,t_2)=(x,y,t_2),
\]
are birational. But this is not the case, in fact by considering the following commutative diagram

\[ \begin{tikzcd}
X_{12} \subset X_1 \times X_2 \arrow{r}{\phi_1} \arrow[swap]{d}{\phi_2} & X_1 \arrow{d}{\pi_1} \\%
X_2 \arrow{r}{\pi_2}& X_0
\end{tikzcd}
\]
we have $\pi_1=\pi_2 \circ \phi_2 \circ \phi_1^{-1}$, thus if $\phi_1$ and $\phi_2$ are birational, it implies that $\pi_1$ and $\pi_2$ must have the same branching points (since $\phi_2 \circ \phi_1^{-1}$ is an isomorphism) and this is a contradition. Hence the genus of $X_{\alpha}$ is at least five, and by Faltings' Theorem, $X_{\alpha}$ has finitely many rational points, and this completes the proof.\hfill$\square$

\section{Proof of Theorem \ref{thm:intersection-curve}}

In the following lemma we will see that for a rational median set, we are always able to apply a rotation, rational scaling or transformation (that preserves the rationality of distances) to see that the rational median set has a simple form. This is an analogue to \cite[Lemma $2.2$ $3$]{Makhul2012} for rational distance sets.
\begin{lemma}\label{lm:coordinates}
	Suppose that $S$ is a rational median set. Then there exists a square free integer $k$ such that if a similarity transformation $T$ transforms two points of $S$ in to $(0,0)$ and $(1,0)$ then any point in $T(S)$ is of the form 
	\[
	(r_1,r_2\sqrt{k}), \quad r_1,r_2 \in \mathbb{Q}.
	\]
\end{lemma}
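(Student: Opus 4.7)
The plan is to first upgrade the rational-median hypothesis to rationality of every pairwise squared distance in $S$, after which the argument becomes essentially the one for rational distance sets in \cite[Lemma~2.2]{Makhul2012}. For any three non-collinear points in $S$, the three median identities
\[
4m_1^2 = 2b+2c-a, \quad 4m_2^2 = 2a+2c-b, \quad 4m_3^2 = 2a+2b-c,
\]
where $a,b,c$ are the squared side lengths, form a nondegenerate linear system with rational right-hand sides, so $a, b, c \in \mathbb{Q}$. Since $S$ is non-collinear, every pair $p,q \in S$ lies in some non-collinear triangle in $S$, and hence $|pq|^2 \in \mathbb{Q}$.

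Next I would apply the similarity $T$ sending two chosen points $p, q \in S$ to $(0,0)$ and $(1,0)$; since $T$ multiplies squared distances by the rational factor $1/|pq|^2$, all squared pairwise distances in $T(S)$ remain rational. For $r = (x,y) \in T(S)$ off the $x$-axis, the triangle $(0,0), (1,0), r$ is non-collinear, so $x^2 + y^2$ and $(x-1)^2 + y^2$ are both rational, yielding $x \in \mathbb{Q}$ and $y^2 \in \mathbb{Q}$. For $r$ on the $x$-axis the same conclusion $x \in \mathbb{Q}$ follows by replacing the third vertex with any off-axis point of $T(S)$, which exists because $T(S)$ is non-collinear.

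To pin down a single square-free $k$, I would pick any $r^* = (x^*, y^*) \in T(S)$ with $y^* \neq 0$ and let $k$ be the square-free part of the positive rational $(y^*)^2$, so that $y^* = c\sqrt{k}$ for some $c \in \mathbb{Q}$. For any other $r_i = (x_i, y_i)$ with $y_i \neq 0$, either $(0,0), r^*, r_i$ or $(1,0), r^*, r_i$ is non-collinear (they cannot both be collinear with $r^*, r_i$ since $y^*, y_i \neq 0$), so $(y^* - y_i)^2 \in \mathbb{Q}$; combined with $(y^*)^2, y_i^2 \in \mathbb{Q}$ this forces $y^* y_i \in \mathbb{Q}$, hence $y_i/y^* \in \mathbb{Q}$, and $y_i$ is a rational multiple of $\sqrt{k}$. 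Points with $y_i = 0$ fit the required form trivially with $r_2 = 0$.

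The main obstacle is the first step, since, unlike in the rational distance setting, the hypothesis that medians are rational does not a priori yield rationality of any individual side length; the argument hinges on the observation that the specific linear combination of squared side lengths appearing in the median formula is invertible, so that all three squared sides can be recovered simultaneously from the three medians.
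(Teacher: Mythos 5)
Your proposal is correct, but it takes a genuinely different route from the paper. The paper works directly with the specific triangle $(0,0)$, $(1,0)$, $(x,y)$: it writes down the rationality of just two medians (from $(x,y)$ to $(\tfrac12,0)$ and from $(1,0)$ to $(\tfrac{x}{2},\tfrac{y}{2})$), eliminates $y$ to get $x\in\mathbb{Q}$, then reads off $y^2\in\mathbb{Q}$; uniqueness of $k$ is obtained from the median of the triangle formed by two points of $S'$ and the origin, whose squared length contains the cross term $2r_2r_4\sqrt{kk'}$. You instead first prove the stronger structural fact that \emph{all} pairwise squared distances in a rational median set are rational, by observing that the linear system $4m_i^2=2b+2c-a$ (and its permutations) in the squared side lengths has an invertible coefficient matrix (determinant $27$); after that, the lemma reduces to the standard rational-distance-set computation as in the cited work of Makhul and Shaffaf, and your handling of the non-collinearity side conditions (every pair extends to a non-collinear triple since $S$ is not contained in a line; at least one of $(0,0)$, $(1,0)$ is off the line through two off-axis points) is careful and correct. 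Your intermediate fact is more reusable and conceptually cleaner -- it explains \emph{why} rational median sets behave like rational distance sets -- while the paper's argument is shorter and needs only two median conditions rather than all three. Both proofs are valid.
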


\begin{proof}
Let $S^{\prime}=T(S)$. Let $(0,0)$, $(1,0)$ and $(x,y)$ be three non-collinear points in $S'$, then by the assumption the distance between $(x,y)$ and $(\frac{1}{2},0)$ is a rational number. Similarly, the distance between $(1,0)$ and $(\frac{x}{2},\frac{y}{2})$ is also a rational number. Specifically,
\begin{equation*}
\left(x-\frac{1}{2}\right)^2+y^2=r_1^2 , \quad \left(\frac{x}{2}-1\right)^2+\left(\frac{y}{2}\right)^2=r_2^2,
\end{equation*}   
where $r_1,r_2 \in \mathbb{Q}$. By eliminating $y$ from these two equations, we have 
\begin{equation*}
\left(x-\frac{1}{2}\right)^2-\left(x-2\right)^2=r_1^2-4r_2^2, \quad \text{hence} \quad x=\frac{r_1^2-4r_2^2}{6}+\frac{5}{4}.
\end{equation*} 
Therefore, $x$ is a rational number. A simple manipulation shows that $y=r\sqrt{k}$ where $r \in \mathbb{Q}$ and $k$ is a square free integer.
	
For the uniqueness of $k$, suppose that $p_1=(r_1,r_2\sqrt{k})$ and $p_2=(r_3,r_4\sqrt{k^{\prime}})$ are in~$S'$. By assumption, the distance between the origin and the middle point $\left(\frac{r_1+r_3}{2},\frac{r_2\sqrt{k}+r_4\sqrt{k^{\prime}}}{2}\right)$ is a rational number (consider the triangle with vertices $p_1, p_2$ and the origin). Hence the number~$2r_2r_4\sqrt{kk^{\prime}}$ should be rational, therefore $k=k^{\prime}$, since $k$ and $k^{\prime}$ are squarefree.
\end{proof}	
Notice that if $C$ is a curve of degree $d$ which contains more than $\frac{d(d+3)}{2}$ points from a rational median set $S$, then $C$ is defined over $\mathbb{Q}(\sqrt{k})$.

\textbf{Proof of Theorem \ref{thm:intersection-curve}}

Similar to the proof of Theorem \ref{thm:perfect-triangle}, we split the proof of this theorem into two parts. First we assume the real algebraic curve $C$ is a line, since in this case the proof is different to higher degree curves.

\textbf{Case $1$: $C$ is a line:}

Suppose that we have a rational median set $S$ with infinitely many points on a line. By definition, we have at least one point off that line. Without loss of generality we may assume the $x$-axis contains infinitely many points of $S$ and $(a,b)$ is a point of $S$ that is off the $x$-axis. Take three points $(c_1,0)$ $(c_2,0)$ and $(c_3,0)$ of $S$ on the $x$-axis. Then we have that for every point $(x,0)$ of $S$ on the $x$-axis (see Figure \ref{fig:almost-collinear}).
\[
\left(x-\frac{a+c_1}{2}\right)^2+\frac{b^2}{4}, \quad \left(x-\frac{a+c_2}{2}\right)^2+\frac{b^2}{4},\quad \text{and} \quad \left(x-\frac{a+c_3}{2}\right)^2+\frac{b^2}{4}
\]
are rational squares (to see this just consider the medians of a triangle with vertices at $(x,0)$, $(c_i,0)$ and $(a,b)$ for $i=1,2,3$). Thus we get a rational point $(x,y)$ on the curve 
\[
y^2=\prod_{i=1}^{i=3}\left[ \left(x-\frac{a+c_i}{2}\right)^2+\frac{b^2}{4}\right]
\]
This is a curve of genus two, since we may choose $(c_1,0)$, $(c_2,0)$ and $(c_3,0)$ such that all roots of the right-hand side are distinct. Therefore by Faltings' Theorem~\ref{thm:faltings}, the curve cannot contain infinitely many rational points, contradicting the fact that $S$ has infinitely many points on the $x$-axis.

\begin{figure}
	\begin{tikzpicture}[scale=0.8]
	\coordinate (A) at (-3,0);
	\coordinate (B) at (-2,0);
	\coordinate (C) at (-1,0);
	\coordinate (D) at (0,0);
	\coordinate (E) at (1,0);
	\coordinate (F) at (2,0);
	\coordinate (G) at (3,0);
	\coordinate (H) at (5,0);
	\coordinate (K) at (0.5,1.5);
	\draw (3,0) node[anchor=north]{$(c_i,0)$};
	\draw (0,2) node[anchor=west]{$(a,b)$};
	
	\path (A) -- (B) coordinate[pos=-1](dd) coordinate[pos=1.5](ff);
	\path (B) -- (C) coordinate[pos=-1](dd) coordinate[pos=1.5](ff);
	\path (C) -- (D) coordinate[pos=-1](dd) coordinate[pos=1.5](ff);
	\path (D) -- (E) coordinate[pos=-1](dd) coordinate[pos=1.5](ff);
	\path (E) -- (F) coordinate[pos=-1](dd) coordinate[pos=1.5](ff);
	\path (A) -- (G) coordinate[pos=-1](dd) coordinate[pos=1.5](ff);
	\draw[black] (dd) -- (A)node{$\bullet$}-- (B) node {$\bullet$}--(ff);
	\draw[black] (dd) -- (B)node{$\bullet$}-- (C) node {$\bullet$}--(ff);
	\draw[black] (dd) -- (C)node{$\bullet$}-- (D) node {$\bullet$}--(ff);
	\draw[black] (dd) -- (D)node{$\bullet$}-- (E) node {$\bullet$}--(ff);
	\draw[black] (dd) -- (E)node{$\bullet$}-- (F) node {$\bullet$}--(ff);
	\draw[black] (dd) -- (A)node{$\bullet$}-- (G) node {$\bullet$}--(ff);
	\draw[black] (dd) -- (A)node{$\bullet$}-- (H) node {$\bullet$}--(ff);
	
	\filldraw [black] (1,1) circle (2pt);
	\filldraw [black] (0.5,1) circle (2pt);
	\filldraw [black] (1.5,1) node[anchor=west]{$(\frac{a+c_i}{2},\frac{b}{2})$} circle (2pt);
	
	\coordinate (A1) at (0,2);
	\coordinate (B1) at (0,0);
	\path (A1) -- (B1) coordinate[pos=-1](dd) coordinate[pos=1.5](ff);
	\draw[black] (dd) -- (A1)node{$\bullet$}-- (B1) node {$\bullet$}--(ff);	
	
	\coordinate (A2) at (0,2);
	\coordinate (B2) at (1,0);
	\path (A2) -- (B2) coordinate[pos=-1](dd) coordinate[pos=1.5](ff);
	\draw[black] (dd) -- (A2)node{$\bullet$}-- (B2) node {$\bullet$}--(ff);

	\coordinate (A3) at (0,2);
	\coordinate (B3) at (2,0);
	\path (A3) -- (B3) coordinate[pos=-1](dd) coordinate[pos=1.5](ff);
	\draw[black] (dd) -- (A3)node{$\bullet$}-- (B3) node {$\bullet$}--(ff);

	\coordinate (A4) at (0,2);
	\coordinate (B4) at (3,0);
	\path (A4) -- (B4) coordinate[pos=-1](dd) coordinate[pos=1.5](ff);
	\draw[black] (dd) -- (A4)node{$\bullet$}-- (B4) node {$\bullet$}--(ff);

	\coordinate (A5) at (1,1);
	\coordinate (B5) at (3,0);
	\path (A5) -- (B5) coordinate[pos=-1](dd) coordinate[pos=1.5](ff);
	\draw[gray] (dd) -- (A5)node{$\bullet$}-- (B5) node {$\bullet$}--(ff);
	
	\coordinate (A6) at (1,1);
	\coordinate (B6) at (1,0);
	\path (A6) -- (B6) coordinate[pos=-1](dd) coordinate[pos=1.5](ff);
	\draw[gray] (dd) -- (A6)node{$\bullet$}-- (B6) node {$\bullet$}--(ff);
	
	\coordinate (A7) at (0.5,1);
	\coordinate (B7) at (3,0);
	\path (A7) -- (B7) coordinate[pos=-1](dd) coordinate[pos=1.5](ff);
	\draw[gray] (dd) -- (A7)node{$\bullet$}-- (B7) node {$\bullet$}--(ff);
	
	\coordinate (A8) at (1.5,1);
	\coordinate (B8) at (1,0);
	\path (A8) -- (B8) coordinate[pos=-1](dd) coordinate[pos=1.5](ff);
	\draw[gray] (dd) -- (A8)node{$\bullet$}-- (B8) node {$\bullet$}--(ff);
	\end{tikzpicture}
	\caption{The point $(\frac{a+c_i}{2},\frac{b}{2})$ is the middle point of $(a,b)$ and $(c_i,0)$ .}\label{fig:almost-collinear}
\end{figure}
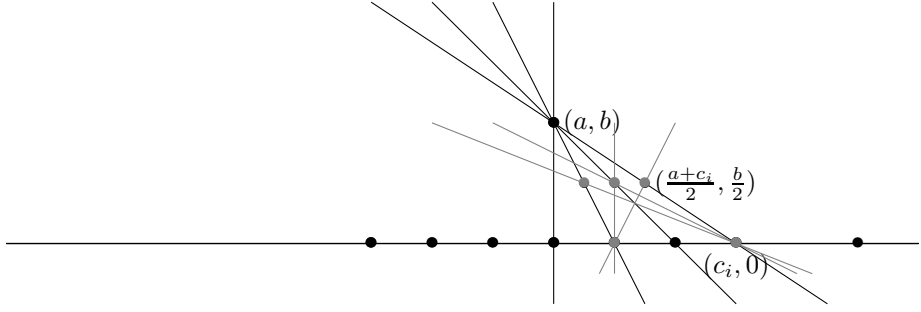

\textbf{Case $2$: $C$ is a curve of degree $d \ge 2$.}

Let~$C\colon=F(x,y)=0$ be an irreducible algebraic curve of degree~$d\ge2$. Suppose that there exists an infinite rational median set~$S$ contained in~$C$. We may assume $(0,0)$ and $(1,0)$ are on $S$. Hence by Lemma \ref{lm:coordinates} the elements of $S$ are of the form $(r_1,r_2\sqrt{k})$. If the genus of $C$ is at least~$2$, then by Faltings' Theorem \ref{thm:faltings}~$S$ is a finite set. 

From now on we assume $C$ is a curve of degree $d \ge 2$ and genus $0$ or $1$. Fix $p_1=(a_1,b_1)$ and $p_2=(a_2,b_2)$ in $S$. For an arbitrary point $(x,y) \in S$ that is not collinear with $p_1,p_2$ we have a triangle such that all its medians $m_1, m_2, m_3$ are rational, see Figure~\ref{fig:triangle}. We have,
\begin{equation*}\label{eq:medians}
\begin{aligned}
m_1^2= \Big(\frac{2x-a_1-a_2}{2}\Big)^2+\Big(\frac{2y-b_1-b_2}{2}\Big)^2,\\
m_2^2=\Big(\frac{x+a_2-2a_1}{2}\Big)^2+\Big(\frac{y+b_2-2b_1}{2}\Big)^2 ,\\
m^2_3=\Big(\frac{x+a_1-2a_2}{2}\Big)^2+\Big(\frac{y+b_2-2b_1}{2}\Big)^2,
\end{aligned}
\end{equation*}
where, $m_1, m_2, m_3$ are rational numbers. On the other hand $F(x,y)=0$, so every point $(x,y) \in S$ gives a rational point $(x,y,m_1,m_2,m_3)$ on the curve~$C_{123}$ in $\R^5$, given by
\begin{equation*}
\begin{aligned} 
F(x,y)=0,\\
z_1^2= \Big(\frac{2x-a_1-a_2}{2}\Big)^2+\Big(\frac{2y-b_1-b_2}{2}\Big)^2,\\
z_2^2=\Big(\frac{x+a_2-2a_1}{2}\Big)^2+\Big(\frac{y+b_2-2b_1}{2}\Big)^2 ,\\
z^2_3=\Big(\frac{x+a_1-2a_2}{2}\Big)^2+\Big(\frac{y+b_2-2b_1}{2}\Big)^2.
\end{aligned} 
\end{equation*}  
We use a similar argument to that in Theorem \ref{thm:perfect-triangle} to show that the genus of~$C_{123}$ is strictly bigger than one. In order to compute the genus of~$C_{123}$, we begin by considering the curves
\begin{equation*}
\begin{aligned}
C_1=\left\{(x,y,z_1): F(x,y)=0,\,z_1^2-\Big(\frac{2x-a_1-a_2}{2}\Big)^2-\Big(\frac{2y-b_1-b_2}{2}\Big)^2=0  \right\},\\
C_2=\left\{(x,y,z_2): F(x,y)=0, \, z_2^2-\Big(\frac{x+a_2-2a_1}{2}\Big)^2-\Big(\frac{y+b_2-2b_1}{2}\Big)^2=0 \right\},\\
C_3=\left\{(x,y,z_3):F(x,y)=0, \, z^2_3-\Big(\frac{x+a_1-2a_2}{2}\Big)^2-\Big(\frac{y+b_2-2b_1}{2}\Big)^2=0 \right\}.
\end{aligned}
\end{equation*}
Define the curve $C_{12}$ in $\R^4$ by 
\begin{equation*}\label{eq:third}
\begin{aligned}
F(x,y)=0,\\
z_1^2-\Big(\frac{2x-a_1-a_2}{2}\Big)^2-\Big(\frac{2y-b_1-b_2}{2}\Big)^2=0,\\
z_2^2-\Big(\frac{x+a_2-2a_1}{2}\Big)^2-\Big(\frac{y+b_2-2b_1}{2}\Big)^2=0.
\end{aligned}
\end{equation*}
Similarly, we may define $C_{13}$ and $C_{23}$. In the first step, we show that the genus of~$C_1$ is at least one. We can also show that the genus of $C_2$ and $C_3$ are at least one, but the proofs of these two cases is essentially the same as the proof for $C_1$, so we omit them.

Let $\pi_{1} \colon C_1 \rightarrow C$, $(x,y,z_1) \mapsto (x,y)$ be the projection onto the first two coordinates. The preimage of a point~$(x,y) \in C$ contains two distinct points
\[
\left(x,y, \pm z_1\right), \, \text{where} \quad z_1=\sqrt{\left(x-\frac{a_1+a_2}{2}\right)^2+\left(y-\frac{b_1+b_2}{2}\right)^2},
\]
except when $z_1=0$, which is the union of two lines $(x-\frac{a_1+a_2}{2})\pm i(y-\frac{b_1+b_2}{2})=0$ in $\C^2$. 

\textbf{The genus of $C_1$ is at least one:}
If $g(C)=1$, then it follows from Riemann-Hurwitz that 
\[
2g(C_1)-2 \ge 2(2-2) + \sum_{p \in C_i}(e_p-1), \quad \text{hence}, \quad g(C_1) \ge 1.
\]
If the genus of $C$ is zero, then it follows from Riemann-Hurwitz that 
\begin{equation*}\label{eq:rimann-hurwitz}
g(C_1) \ge -1+\frac{1}{2} \sum_{p \in C_i} (e_p-1).
\end{equation*}
So to get $g(C_1) \ge 1$ we need to show that the projection $\pi_1$ has at least~$3$ ramification points.

The potential ramification points correspond to the preimages of the intersection of $C$ with the lines $(x-\frac{a_1+a_2}{2})\pm i(y-\frac{b_1+b_2}{2})=0$, where by Bezout's Theorem there are $2d$ such points, counting with multiplicities in~$\p^2_{\C}$. Let $p$ be such an intersection point, then $p$ cannot be a ramification point if the curve has a singularity at $p$, or the curve $C$ is tangent to the line there. By varying $(a_1,b_1), (a_2,b_2)$ in $S$, we obtain infinitely many lines in the plane with slopes $\pm i$, each through the corresponding point~$(\frac{a_1+a_2}{2},\frac{b_1+b_2}{2})$, where only finitely many such lines are tangent to the curve $C$ or passing through its singularities. This is because the number of tangents that can be drawn from a fixed point in~$\p^2_{\C}$ to a given curve is finite.

On the other hand, we assumed that $S$ is an infinite set, thus for all but finitely many pairs of points $p_1, p_2 \in S$, the complex line $(x-\frac{a_1+a_2}{2})+i(y-\frac{b_1+b_2}{2})=0$ meets $C$ transversely at~$d$ points. Similarly, the complex line~$\left(x-\frac{a_1+a_2}{2}\right)-i\left(y-\frac{b_1+b_2}{2}\right)=0$ meets $C$ transversely at $d$ points.

If the middle point~$(\frac{a_1+a_2}{2}, \frac{b_1+b_2}{2})$ belongs to $C$, we get~$2d-2$ ramification points, and if the degree of~$C$ is at least~$3$ we obtain at least~$4$ ramification points, thus by Riemann-Hurwitz the genus of~$C_1$ is at least~$1$. 

If the degree of $C$ is $2$, then since $(a_1,b_1)$ and $(a_2,b_2)$ lie on $C$, we know that $(\frac{a_1+a_2}{2},\frac{b_1+b_2}{2})$ does not lie on~$C$, and in this case we get~$4$ ramification points. It follows from Riemann-Hurwitz that~$C_1$ has genus at least~$1$.  Hence the genus of $C_1$ always is at least one.

Now if one of the curves $C_{i}$ for~$i=1,2,3$ (say $C_1$) has genus at least~$2$, 
then we can determine a bound from below for the genus of $C_{123}$ using the Riemann–Hurwitz formula applied to the following projections
\[
{\displaystyle C_{123}\;{\xrightarrow {\ \rho_{1}\ }}\;C_{12}\;{\xrightarrow {\ \rho_{2}\ }}\;C_{1}\;{\xrightarrow {\ \rho_{3}\ }}} \displaystyle \, C,
\]
where each $\rho_i$ is a map of degree $2$. Therefore the genus of $C_{123}$ is at least two and by Faltings' Theorem $S$ must be a finite set, which is a contradiction. 

Now suppose that the genus of each $C_i$ is one, in this situation consider two $2:1$ projection maps $\pi_1 \colon C_1 \rightarrow C$ and $\pi_2 \colon C_2 \rightarrow C$ defined by $\pi_1((x,y,z_1))=(x,y)$ and $\pi_2((x,y,z_2))=(x,y)$ respectively. An equivalent definition of the curve $C_{12}$ is as follows,
\[
C_{12}\colon=\left\{(p_1,p_2) \in C_1 \times C_2: \pi_1(p_1)=\pi_2(p_2) \right\}.
\]
By the definition $C_{12}$ is the fiber product of $C_1$ and $C_2$ over $C$, and we have the following commutative diagram 
\[ \begin{tikzcd}
C_{12} \arrow{r}{\phi_1} \arrow[swap]{d}{\phi_2} & C_1 \arrow{d}{\pi_1} \\%
C_2 \arrow{r}{\pi_2}& C
\end{tikzcd}
\] 
Where, $\phi_1 (x,y,z_1,z_2)=(x,y,z_1)$ and $\phi_2(x,y,z_1,z_2)=(x,y,z_2)$. As we have seen, the branching points of $\pi_{1}$ correspond to the points on the intersection of two complex lines~$(x-\frac{a_1+a_2}{2})\pm i(y-\frac{b_1+b_2}{2})=0$ with $F(x,y)=0$, while the branching  points of $\pi_{2}$ correspond to the intersections of two complex lines 
\[
(x-2a_1+a_2)\pm i(y-2b_1+b_2)=0
\]
with $F(x,y)=0$.  Since these are lines with slopes $\pm i$ through distinct points $(\frac{a_1+a_2}{2},\frac{b_1+b_2}{2})$ and $(2a_1-a_2,2b_1-b_2)$, $\pi_{1}$ has at least one ramification point that is not a ramification point of $\pi_{2}$. This implies that each $\phi_i$ for $i=1,2$ has at least one ramification point that is not a ramification point. Indeed, let $y$ be a ramification point of $\pi_{1}$ that is not a ramification point of $\pi_{2}$, we have~$\pi_{1}^{-1}(y)=\left\{x_1\right\}$, while~$\pi_{2}^{-1}(y)=\left\{x_2,x_3\right\}$. Now if we assume that $\phi_i$ for $i=1,2$ has no ramification point, then we obtain~$(\pi_1 \circ \phi_1)^{-1} (y)=\left\{\alpha_1, \alpha_2\right\}$, where $\alpha_1 \not= \alpha_2$, while $(\pi_2\circ \phi_2)^{-1}(y)=\left\{\beta_1,\beta_2,\beta_3,\beta_4\right\}$, where the~$\beta_i$'s are distinct. On the other hand, by the commutativity of the above diagram we have $\pi_1\circ\phi_1=\pi_2\circ\phi_2$. Therefore $(\pi_1\circ \phi_1)^{-1}(y)=(\pi_2\circ \phi_2)^{-1}(y)$ and this is a contradiction. Thus Riemann-Hurwitz implies that
\[
2g(C_{12})-2\ge \deg(\phi_1)(2g(C_1)-2)+\sum_{p \in C_{12}}(e_p-1), \quad \text{hence}\quad g(C_{12})\ge 2.
\]
Therefore, by Faltings' Theorem $C_{12}$ has finitely many rational points. Hence the number of rational points on the curve~$C_{123}$ must be finite, otherwise by the projection from $C_{123}$ to~$C_{12}$ we obtain infinitely many rational points on $C_{12}$, and this is a contradiction.\hfill$\square$

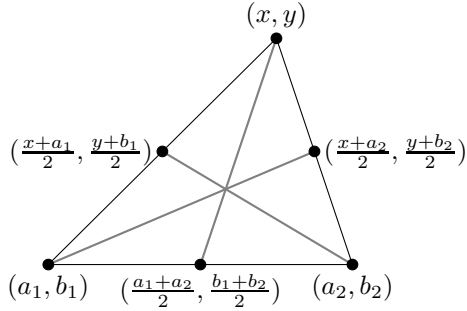
\begin{figure}
	\begin{tikzpicture}
	\draw (0,0) node[anchor=north]{$(a_1,b_1)$}
	-- (4,0) node[anchor=north]{$(a_2,b_2)$}
	-- (3,3) node[anchor=south]{$(x,y)$}
	-- cycle;
	\draw[gray, thick] (3,3) -- (2,0);
	\draw[gray, thick] (0,0) -- (3.5,1.5);
	\draw[gray, thick] (4,0) -- (1.5,1.5);
	\filldraw [black] (1.5,1.5) node[anchor=east]{$(\frac{x+a_1}{2},\frac{y+b_1}{2})$} circle (2pt);
	\filldraw [black] (3.5,1.5) node[anchor=west]{$(\frac{x+a_2}{2},\frac{y+b_2}{2})$} circle (2pt);
	\filldraw [black] (2,0) node[anchor=north]{$(\frac{a_1+a_2}{2},\frac{b_1+b_2}{2})$} circle (2pt);
	\filldraw [black] (3,3) node[anchor=west]{} circle (2pt);
	\filldraw [black] (0,0) node[anchor=west]{} circle (2pt);
	\filldraw [black] (4,0) node[anchor=west]{} circle (2pt);
	\end{tikzpicture}
	\caption{A triangle with its medians}\label{fig:triangle}
\end{figure}

\section{Final comments}
Similar to Shaffaf \cite{Shaffaf2018} and Tao \cite{Tao2014}, we can show that by assuming the weak Lang conjecture, if $S$ is a rational median set in the plane $\R^2$, then $S$ is a finite set. Moreover, there exists a natural number $N$ such that if $S$ is a rational median set then $|S| \le N$. 

\begin{question}
Does there exist a set of four non-collinear points in the plane $\R^2$, such that all its medians are rational? Furthermore, given a natural number $n$ can you find a rational median set of size $n$? 
\end{question}

\section{Acknowledgements}
The author was supported by the Austrian Science Fund (FWF): Project P 30405-N32.
I am grateful to Matteo Gallet, Niels Lubbes, Oliver Roche-Newton, Josef Schicho, and Audie Warren for several helpful conversations and comments.

\providecommand{\bysame}{\leavevmode\hbox to3em{\hrulefill}\thinspace}
\providecommand{\MR}{\relax\ifhmode\unskip\space\fi MR }
% \MRhref is called by the amsart/book/proc definition of \MR.
\providecommand{\MRhref}[2]{%
	\href{http://www.ams.org/mathscinet-getitem?mr=#1}{#2}
}
\providecommand{\href}[2]{#2}


\begin{thebibliography}{HLSS15}
	
	\bibitem[ABT19]{Ascher2019}
	Kenneth Ascher, Lucas Braune, and Amos Turchet, \emph{The {E}rd{\"o}s-{U}lam
		problem, {L}ang's conjecture, and uniformity}, Available at
	\url{https://arxiv.org/abs/1901.02616}.
	
	\bibitem[AE45]{Anning1945}
	Norman~H. Anning and Paul Erd\"{o}s, \emph{Integral distances}, Bull. Amer.
	Math. Soc. \textbf{51} (1945), 598--600. \MR{13511}
	
	\bibitem[BR97]{Buchholz1997}
	Ralph~H. Buchholz and Randall~L. Rathbun, \emph{An infinite set of {H}eron
		triangles with two rational medians}, Amer. Math. Monthly \textbf{104}
	(1997), no.~2, 107--115. \MR{1437411}
	
	\bibitem[Buc02]{Buchholz2002}
	Ralph~H. Buchholz, \emph{Triangles with three rational medians}, J. Number
	Theory \textbf{97} (2002), no.~1, 113--131. \MR{1939139}
	
	\bibitem[Fal84]{Faltings1984}
	Gerd Faltings, \emph{Erratum: ``{F}initeness theorems for abelian varieties
		over number fields''}, Invent. Math. \textbf{75} (1984), no.~2, 381.
	\MR{732554}
	
	\bibitem[Guy04]{Guy2004}
	Richard~K. Guy, \emph{Unsolved problems in number theory}, third ed., Problem
	Books in Mathematics, Springer-Verlag, New York, 2004. \MR{2076335}
	
	\bibitem[Har77]{Hartshorne1977}
	Robin Hartshorne, \emph{Algebraic geometry}, Springer-Verlag, New
	York-Heidelberg, 1977, Graduate Texts in Mathematics, No. 52. \MR{0463157}
	
	\bibitem[HLSS15]{Hegedues2015}
	G\'{a}bor Heged\"{u}s, Zijia Li, Josef Schicho, and Hans-Peter Schr\"{o}cker,
	\emph{The theory of bonds {II}: {C}losed 6{R} linkages with maximal genus},
	J. Symbolic Comput. \textbf{68} (2015), no.~part 2, 167--180. \MR{3283860}
	
	\bibitem[MS12]{Makhul2012}
	Mehdi Makhul and Jafar Shaffaf, \emph{On uniform boundedness of a rational
		distance set in the plane}, C. R. Math. Acad. Sci. Paris \textbf{350} (2012),
	no.~3-4, 121--124. \MR{2891094}
	
	\bibitem[Pas17]{Pasten2017}
	Hector Pasten, \emph{Definability of {F}robenius orbits and a result on
		rational distance sets}, Monatsh. Math. \textbf{182} (2017), no.~1, 99--126.
	\MR{3592123}
	
	\bibitem[SdZ10]{Solymosi2010}
	Jozsef Solymosi and Frank de~Zeeuw, \emph{On a question of {E}rd{\"o}s and
		{U}lam}, Discrete Comput. Geom. \textbf{43} (2010), no.~2, 393--401.
	\MR{2579704}
	
	\bibitem[Sha13]{Shafarevich2013}
	Igor~R. Shafarevich, \emph{Basic algebraic geometry. 1}, third ed., Springer,
	Heidelberg, 2013, Varieties in projective space. \MR{3100243}
	
	\bibitem[Sha18]{Shaffaf2018}
	Jafar Shaffaf, \emph{A solution of the {E}rd{\"o}s-{U}lam problem on rational
		distance sets assuming the {B}ombieri-{L}ang conjecture}, Discrete Comput.
	Geom. \textbf{60} (2018), no.~2, 283--293. \MR{3835612}
	
	\bibitem[Sil86]{Silverman1986}
	Joseph~H. Silverman, \emph{The arithmetic of elliptic curves}, Graduate Texts
	in Mathematics, vol. 106, Springer-Verlag, New York, 1986. \MR{817210}
	
	\bibitem[{Sta}18]{StacksProjectAuthors2018}
	The {Stacks Project Authors}, \emph{\textit{Stacks Project}},
	\url{https://stacks.math.columbia.edu}, 2018.
	
	\bibitem[Tao14]{Tao2014}
	Terence Tao, \emph{The {E}rd{\"o}s-{U}lam problem, varieties of general type, and the {B}ombieri-{L}ang conjecture}, Available at
	\url{https://terrytao.wordpress.com/2014/12/20/the-erdos-ulam-problem-varieties-of-general-type-and-the-bombieri-lang-conjecture/}.
\end{thebibliography}
\end{document}